\documentclass{amsart}
\usepackage[pagewise]{lineno}
\usepackage{mathabx}

\newcommand{\Q}{\mathbb{Q}}

\newcommand{\R}{\mathbb{R}}
\newcommand{\N}{\mathbb{N}}


\newcommand{\ran}{\operatorname{ran}}

\newcommand{\hath}{\hat{h}}
\newcommand{\pwst}{\mathcal{P}}


\newcommand{\M}{\mathcal{M}}

\newcommand{\Dyadic}{\mathbb{D}}


\newcommand{\lang}{L}
\newcommand{\CK}{\omega_1^{\operatorname{CK}}}
\newcommand{\KO}{\mathcal{O}}
\newcommand{\lneqO}{<_\KO}
\newcommand{\hjump}{\mathcal{H}}
\newcommand{\dsym}{\underline{d}}
\newcommand{\Th}{\operatorname{Th}}
\renewcommand{\u}{\underline{u}}
\newcommand{\dotsub}{\ooalign{\hidewidth\raise1ex\hbox{.}\hidewidth\cr$-$\cr}}
\newcommand{\half}{\frac{1}{2}}
\DeclareMathOperator*{\infconj}{\bigwedge\mkern-15mu\bigwedge}
\DeclareMathOperator*{\infdis}{\bigvee\mkern-15mu\bigvee}


\newtheorem{theorem}{Theorem}[section]
\newtheorem{lemma}[theorem]{Lemma}

\newtheorem{corollary}[theorem]{Corollary}

\newtheorem{question}[theorem]{Question}

\theoremstyle{definition}
\newtheorem{definition}[theorem]{Definition}

\numberwithin{equation}{section}

\begin{document}
\title{Hyperarithmetic numerals}
\author{Caleb M.H. Camrud}
\address{CEIP Maria Zambrano\\
 Seville, Spain}
 \email{ccamrud1@gmail.com}

\address{Department of Mathematics\\
Iowa State University\\
Ames, Iowa 50011}
\email{ccamrud@iastate.edu}

\author{Timothy H. McNicholl}
\address{Department of Mathematics\\
Iowa State University\\
Ames, Iowa 50011}
\email{mcnichol@iastate.edu}

\begin{abstract}
Within the framework of computable infinitary continuous logic, we develop a system of hyperarithmetic numerals.
These numerals are infinitary sentences in a metric language $\lang$ that have the same truth value in every interpretation of $\lang$.  
We prove that every hyperarithmetic real can be represented by a hyperarithmetic numeral at the same level of complexity.  
\end{abstract}
\maketitle

\section{Introduction}\label{sec:intr}

A \emph{numeral} is a symbolic representation of a number. The common notions which may come to mind are the Arabic digits ``0'' through ``9'', though also commonly used are the Roman ``I'', ``V'', ``X'', \emph{etc.}  
A numeral for our purposes, however, is defined as the following.

\begin{definition}\label{def:nmrl}
  A possibly infinitary $\lang$-sentence $\varphi$ is a \emph{numeral} for $r$ if for every $\lang$-structure $\M$, $\varphi^\M=r$.
\end{definition}

In classical logic, the space of truth values includes only two numbers, $0$ and $1$, with $0$ normally interpreted as falsity and $1$ as truth. As such, in classical logic, we can consider sentences yielding \emph{contradictions} as numerals for $0$ and wffs giving \emph{tautologies} as numerals for $1$. To give a pair of examples, for every first order structure $\M$,
\[(\exists x \ x\neq x)^\M=0 \ \ \ \ \text{ and } \ \ \ \ (\forall x \ x=x)^\M=1.\]

\noindent But in the continuous logic of \cite{BenYaacov.Berenstein.Henson.Usvyatsov.2008}, the space of truth values is $[0,1]$. 
A natural question arises: for which $r\in [0,1]$ is there a wff $\varphi$ such that for every $\lang$-structure $\M$, $\varphi^\M=r$?  That is, which numbers have $\lang$-numerals?

In the original version of this continuous logic, there is a connective $u$ for every continuous mapping on $[0,1]$. 
Notably, such versions of continuous logic have trivial numerals: for every $r\in [0,1]$, there is a connective $\u_r$ 
corresponding to the constant map $u_r:[0,1]\to \{r\}$. Hence, for every structure $\M$ and 
sentence $\varphi$, $(\u_r \varphi)^\M=u_r(\varphi^\M)=r$.

In subsequent formulations of this logic, however, a restricted set of connectives consisting only of $\neg$, $\dotsub$,
 and $\half$ was used \cite{BenYaacov.Iovino.2009,BenYaacov.Pedersen.2010,BenYaacov.Usvyatsov.2010}. 
 This was done for two primary reasons. First, $\neg$ plays precisely the role of classical negation ($\neg$) and
  $\dotsub$ of reverse implication ($\leftarrow$), while the interpretation of the $\half$ operator is similarly intuitive, 
  halving the truth-value of the wff it is attached to. Second, in \cite{BenYaacov.Usvyatsov.2010}, it was shown that 
  after interpretation, $\neg$, $\half$, and $\dotsub$ are dense in the set of all continuous maps on $[0,1]$. Thus 
  finitary well-formed formulas in these connectives can approximate those in the wider set of connectives arbitrarily 
  well. Such an approximation is, moreover, sufficient for completeness (as was shown in 
  \cite{BenYaacov.Pedersen.2010}). To avoid trivialities, this is the logic we will be investigating in this paper. 

Unfortunately, with this reduced set of connectives, the set of reals with $\lang$-numerals may be very small.  
For example, if $\lang$ is the language of metric spaces, then only the dyadic rational numbers in $[0,1]$ have $\lang$-numerals.  
So, in order to expand the space of numerals, we consider infinitary sentences of $\lang$ in the continuous infinitary logic developed by Eagle \cite{Eagle.2017}.  
With this expansion, it is not hard to see that every number in $[0,1]$ has an $\lang$-numeral; in fact every such number has an infinitary $\Sigma_1$ numeral.  
So, to sharpen the question, we assume $\lang$ is computably numbered and we focus on computable infinitary formulas.   
In fact, we confine our attention to the language of metric spaces $\lang_M$ which consists only of a symbol $\dsym$ for the metric.  
Since $\lang_M$ is a sub-language of all other metric languages (\emph{i.e.}, signatures of metric structures as in \cite{BenYaacov.Berenstein.Henson.Usvyatsov.2008}), our results apply to any computably 
numbered metric language.  
We then investigate which reals have computable infinitary $\lang$-numerals.  We also investigate the relationship between the complexity of a 
real number and the complexity of its $\lang$-numerals.  We measure the complexity of a real by the complexity of its 
right and left Dedekind cuts in the hyperarithmetic hierarchy.

Our main result is the following.

\begin{theorem}\label{thm:main}
Suppose $0 < \alpha < \CK$, and let $r \in [0,1]$.
\begin{enumerate}
	\item If the right Dedekind cut of $r$ is $\Sigma^0_\alpha$, then 
	$r$ has a computable $\Sigma_\alpha$ $\lang_M$-numeral $\Phi$.
	 Furthermore, a code of $\Phi$ can be computed from
	a $\Sigma^0_\alpha$ index of the right Dedekind cut of $r$.

	\item If the left Dedekind cut of $r$ is $\Sigma^0_\alpha$, then 
	$r$ has a computable $\Pi_\alpha$ $\lang_M$-numeral $\Phi$.
	Furthermore, a code of $\Phi$ can be 
	computed from a $\Sigma^0_\alpha$ index of the left Dedekind cut of $r$.
\end{enumerate}
\end{theorem}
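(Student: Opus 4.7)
I would argue both parts by transfinite induction on $\alpha$, reducing to a gated infinitary combination of dyadic numerals. Every dyadic rational $q \in \Dyadic \cap [0,1]$ has a uniformly computable quantifier-free (hence computable $\Sigma_0 = \Pi_0$) $\lang_M$-numeral $\varphi_q$: start from $\sup_x \dsym(x,x)$, which takes value $0$ in every metric structure, and its negation, which takes value $1$, and iterate $\neg$, $\half$, and $\dotsub$. Writing $R := \{q \in \Dyadic \cap [0,1] : q > r\}$ for the right Dedekind cut and $L := \{q \in \Dyadic \cap [0,1] : q < r\}$ for the left cut, we have $\inf R = r = \sup L$, so the idea is to take $\Phi$ to be a gated $\infdis_{q} \varphi_q$ for part (1) and a gated $\infconj_q \varphi_q$ for part (2), where the gate selects only those $q$ belonging to the cut.

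The heart of the argument is a uniform translation lemma, proved by transfinite induction on $0 < \alpha < \CK$: from a $\Sigma^0_\alpha$ index for a set $P \subseteq \omega$ one can compute, uniformly in $q$, an index for a computable $\Sigma_\alpha$ $\lang_M$-sentence $\rho_q$ with $\rho_q^\M = 0$ whenever $P(q)$ holds and $\rho_q^\M = 1$ otherwise, and dually for $\Pi_\alpha$. For $\alpha = 1$, write $P(q) \iff \exists n\, Q(q,n)$ with $Q$ decidable, let $\sigma_{q,n}$ be the finitary numeral for $0$ if $Q(q,n)$ holds and for $1$ otherwise, and set $\rho_q := \infdis_n \sigma_{q,n}$. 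Successor steps peel off one existential and invoke $\neg$ to swap $\Sigma$ and $\Pi$. Limit stages $\lambda$ use a Kleene $\KO$-notation for $\lambda$ to fix a computable sequence of notations $\beta_n < \lambda$ together with uniformly $\Pi^0_{\beta_n}$ approximations $P_n$ to $P$, and then set $\rho_q := \infdis_n \neg \rho'_{n,q}$ where $\rho'_{n,q}$ is the $\Sigma_{\beta_n}$ characteristic formula supplied by the inductive hypothesis.

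Granted the lemma, part (1) is obtained by applying it to $R$ and defining $\psi_q := \max(\varphi_q, \rho_q)$, whose value in any metric $\M$ is $q$ if $q \in R$ and $1$ otherwise, in both cases $\geq r$; then $\Phi := \infdis_{q \in \Dyadic \cap [0,1]} \psi_q$ satisfies $\Phi^\M = \inf_{q \in R} q = r$ and is computable $\Sigma_\alpha$. For part (2), apply the lemma to $L$ and dualize: let $\psi_q := \min(\varphi_q, \neg \rho_q)$, which is $\Pi_\alpha$ and has value $q$ on $L$ and $0$ off $L$, and set $\Phi := \infconj_q \psi_q$, whose value is $\sup L = r$. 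Uniformity is preserved throughout, so a code of $\Phi$ is computable from a $\Sigma^0_\alpha$ index of the relevant cut, which gives the index-computability clauses.

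The main obstacle I expect is the limit-ordinal bookkeeping in the translation lemma — coordinating a Kleene $\KO$-notation for $\alpha$ with Eagle's computable $\Sigma_\alpha / \Pi_\alpha$ fragments so that the constructed indices remain genuinely computable from a notation for $\alpha$, and that the computable sequence of approximating notations can be extracted uniformly. A subsidiary point to verify is that combining a $\Sigma_0$ datum $\varphi_q$ with a $\Sigma_\alpha$ (respectively $\Pi_\alpha$) gate under $\max$ (respectively $\min$), and then an outer $\infdis$ (respectively $\infconj$), yields a sentence at the intended complexity class rather than one level up; this rests on the natural closure properties of these fragments under $\max$ and $\min$ as finitary combinations that distribute over the infinitary connectives.
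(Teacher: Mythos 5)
Your proposal is correct in outline but takes a genuinely different route from the paper, so a comparison is worthwhile.

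The paper's proof recurses on the \emph{real number} $r$: Lemmas~\ref{lm:rt.sgma02}--\ref{lm:lft.sgma.lmt} show that a right (left) $\Sigma^0_{\alpha+1}$ real in $[0,1]$ is the decreasing infimum (increasing supremum) of a left (right) $\Sigma^0_\alpha$ sequence of reals in $[0,1]$, with a parallel statement at limit ordinals using a computable cofinal $h:\N\to\alpha$. The numeral is then obtained by effective transfinite recursion, each level applying $\infconj$ or $\infdis$ to numerals for the approximating reals; dyadic numerals enter only at the base. Your proposal instead recurses on the \emph{arithmetical form of the Dedekind cut}: a ``translation lemma'' converts a $\Sigma^0_\alpha$ index of a set $P$ into a uniformly computable crisp $\Sigma_\alpha$ gate $\rho_q$ with $\rho_q^\M\in\{0,1\}$ deciding $P(q)$, and the numeral is assembled in one step at the top as $\infdis_q \max(\varphi_q,\rho_q)$ (resp.\ $\infconj_q \min(\varphi_q,\neg\rho_q)$). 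This avoids the paper's analytic decomposition lemmas entirely, at the cost of pushing the bookkeeping into the syntactic hierarchy: you need the $\Sigma_\alpha$/$\Pi_\alpha$ fragments of computable infinitary continuous logic to be closed under the finitary combinations implementing $\max$ and $\min$ and under computable $\infdis$/$\infconj$, a point you correctly flag but do not verify. The paper's decomposition of $r$ into a sequence of lower-complexity reals sidesteps this because the gating logic is never explicit in the formula. Both approaches need effective transfinite recursion and the standard decomposition of $\Sigma^0_\lambda$ sets at limit notations, and both preserve indices uniformly.

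Two smaller corrections to your write-up. First, $\lang_M$ has no constant symbols, so there are no quantifier-free sentences at all: $\sup_x \dsym(x,x)$ is $\Pi_1$, not $\Pi_0$, and consequently the dyadic numerals $\varphi_q$ live at level $\Sigma_1$ or $\Pi_1$. This does not break the argument for $\alpha\geq 1$, but you should use the $\Sigma_1$ (existential) dyadic numerals $\nu^\exists_q$ in part~(1) and the $\Pi_1$ (universal) ones $\nu^\forall_q$ in part~(2) — as the paper does — so that $\max(\varphi_q,\rho_q)$ stays $\Sigma_\alpha$ and $\min(\varphi_q,\neg\rho_q)$ stays $\Pi_\alpha$ already at $\alpha=1$. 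Second, $\max$ and $\min$ are not primitive connectives in the restricted set $\{\neg,\dotsub,\half\}$; they are definable as $\min(x,y)=x\dotsub(x\dotsub y)$ and $\max(x,y)=\neg\min(\neg x,\neg y)$, and since these compositions involve $\neg$, the claim that they ``distribute over the infinitary connectives'' at a fixed level requires a short explicit normal-form argument in the continuous hierarchy rather than being immediate.

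\end{document}
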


By means of a straightforward transfinite induction, it follows that the converses of the statements in Theorem \ref{thm:main} hold.

Before moving to the proof of Theorem \ref{thm:main}, we present some background material and a few preliminary developments.

\section{Background and preliminaries}\label{sec:back}

Let $\Dyadic$ denote the set of all dyadic rational numbers.

When $s \in \R$, let $D^<(s)$ and $D^>(s)$ denote the left and right Dedekind cuts of $s$ respectively.
Let $D^\leq(s)$ and $D^\geq(s)$ denote the complements of the right and left Dedekind cuts of $s$ respectively.

\subsection{Background from computability theory}

We presume familiarity with the hyperarithmetic hierarchy as constructed in Chapter 5 of \cite{Ash.Knight.2000}.  See also Chapter II of \cite{Sacks.1990}.
In \cite{Camrud.Goldbring.McNicholl.2021+}, continuous computable infinitary logic is developed based on 
the classical computable infinitary logic (see Chapter 7 of \cite{Ash.Knight.2000}) and the continuous infinitary logic
developed by Eagle in \cite{Eagle.2017}.  

Let $\langle, \rangle$ be a computable bijection of $\N \times \N$ onto $\N$, and let $(\ )_0$ and $(\ )_1$ 
denote its left and right inverses respectively so that $\langle (n)_0, (n)_1 \rangle = n$ for all $n \in \N$.

Fix a computable enumeration $(q_n)_{n \in \N}$ of $\Q$.

Let $m,n$ be positive integers.  Suppose $R \subseteq \N^m$, and let $X \subseteq \N$.  We say that $e \in \N$ is a
\emph{$\Sigma_n^0(X)$ index} of $R$ if $e$ is a $\Delta^0_1$ index of an $S \subseteq \pwst(\N) \times \N^{n + m}$ so that for all $y_1, \ldots, y_m \in \N$, 
\[
R(y_1, \ldots, y_m)\ \Leftrightarrow \exists x_1 \forall x_2 \ldots Q x_n S(X; y_1, \ldots, y_m, x_1, \ldots, x_n)
\]
where $Q \in \{\forall, \exists\}$ is $\forall$ just in case $n$ is even.
By a \emph{$\Sigma^0_n$ index} of $R$ we mean a $\Sigma^0_n(\emptyset)$ index of $R$.  

Suppose $\alpha < \CK$ and $A \subseteq \N$.  We say that $e \in \N$ is a \emph{$\Sigma^0_\alpha$ index of $A$} 
if $(n)_1 \in \KO$ and $A = W_{(n)_0}^{\hjump((n)_1)}$.

Suppose $h : \N \rightarrow \CK$.  We say $h$ is \emph{computable} if there is an $\hath : \N \rightarrow \KO$ 
so that $h(n) = |\hath(n)|_\KO$ for all $n \in \ N$ and so that $\ran(\hath)$ is linearly ordered by $\lneqO$.  
If $h$ is computable, then an \emph{index} of $h$ is an index of a function $\hath$ with these properties.

\subsection{Preliminaries from computable analysis}\label{sec:prelim}

Let $r \in \R$, and suppose $\alpha \in \CK$.  We say $r$ is \emph{left (right) $\Sigma^0_\alpha$} if its left (right) Dedekind cut is $\Sigma^0_\alpha$.  
If $r$ is left (right) $\Sigma^0_\alpha$, then a \emph{left (right) $\Sigma^0_\alpha$ index} of $r$ is a $\Sigma^0_\alpha$ index of its left (right) Dedekind cut.  

Let $(r_n)_{n \in \N}$ be a sequence of real numbers, and suppose $h : \N \rightarrow \CK$.
We say that $(r_n)_{n \in \N}$ is \emph{left (right) $\Sigma^0_h$} if 
$r_n$ is left (right) $\Sigma^0_{h(n)}$ for every $n \in \N$.  Suppose $h$ is computable, and let $e$ be
any index of $h$.  If $(r_n)_{n \in \N}$ is left (right) $\Sigma^0_h$, then we say $e$ is a 
left (right) $\Sigma^0_h$ index of $(r_n)_{n \in \N}$.

\subsection{Dyadic numerals}\label{subsec:dydc}

We construct here two canonical sets of numerals for the dyadic rationals in $[0,1]$: $(\nu^\exists_r)_{r \in \Dyadic \cap [0,1]}$ and $(\nu^\forall_r)_{r \in \Dyadic \cap [0,1]}$.  The latter will be universal, and the former existential.
We make use of the fact that $\Dyadic \cap [0,1]$ is the smallest set of real numbers that contains $0$ and $1$ and 
is closed under $x \mapsto \frac{1}{2} x$ and $x \mapsto 1 - x$.  Accordingly, 
$\nu^\exists_r$ and $\nu^\forall_r$ are defined by the following equations.
\begin{eqnarray*}
\nu^\exists_0 & = & \inf_{x_0} \dsym(x_0,x_0)\\
\nu^\forall_0 & = & \sup_{x_0} \dsym(x_0,x_0)\\
\nu^\exists_{1 - r} & = & \neg \nu^\forall_r\\
\nu^\forall_{1 - r} & = & \neg \nu^\exists_r\\
\nu_{r/2}^Q & = & \frac{1}{2} \nu_r^Q, \ Q \in \{\forall, \exists\}
\end{eqnarray*}

\section{Proof of main theorem}\label{sec:pr.mn}

We divide the main part of the proof of Theorem \ref{thm:main} into the following lemmas.

\begin{lemma}\label{lm:rt.sgma02}
If $r \in [0,1]$ is right $\Sigma^0_2$, then there is a left $\Sigma^0_1$ nonincreasing sequence $(r_n)_{n \in \N}$ so that $r = \inf_n r_n$ and so that $r_n \in [0,1]$.
\end{lemma}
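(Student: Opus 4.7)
The plan is to define $r_n$ via a uniformly c.e.\ condition on its left Dedekind cut, derived from a $\Sigma^0_2$ index of $D^>(r)$, after first modifying the given computable relation to control the sizes of witnesses for membership in $D^>(r)$. Starting with computable $R$ such that $q_i > r \Leftrightarrow \exists j\,\forall m\, R(i,j,m)$, I would set
\[
R^*(i,j,m) = 1 \ \iff\ \exists\, i' \leq i\ [\, q_{i'} \leq q_i \ \text{and}\ \forall m' \leq m\ R(i',j,m') = 1 \,],
\]
which is still computable. A pigeonhole argument on the bounded existential $\exists i' \leq i$ gives that $\forall m\, R^*(i,j,m)$ holds if and only if there is some $i^* \leq i$ with $q_{i^*} \leq q_i$ and $\forall m\, R(i^*,j,m)$; hence $\exists j\,\forall m\, R^*(i,j,m) \Leftrightarrow q_i > r$, so $R^*$ presents the same set $D^>(r)$. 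The key gain is that for any rational $q^{**} \in (r,1) \cap \Q$ with some $R$-witness $J^{**}$ and index $i^{**}$, every $q_i \geq q^{**}$ with $i \geq i^{**}$ has minimal $R^*$-witness at most $J^{**}$. Combined with the finitely many exceptional indices $i < i^{**}$, this yields a uniform bound $J(\epsilon)$ on the minimal $R^*$-witnesses of all rationals in $(r+\epsilon,\,1] \cap \Q$, for each $\epsilon > 0$.

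I would then define, for each $n$,
\[
D^<(r_n) = \{\, q \in \Q : q < 1 \ \text{and}\ \exists q' \in (q,1] \cap \Q\ \forall j \leq n\ \exists m\ \neg R^*(\mathrm{idx}(q'),j,m) \,\},
\]
which is uniformly c.e.\ in $n$; I take $r_n \in [0,1]$ to be the corresponding left $\Sigma^0_1$ real (capped at $1$). The sequence $(r_n)$ is nonincreasing because the condition on $q'$ becomes strictly more stringent as $n$ grows. The inequality $r_n \geq r$ follows since, for any rational $q < r$, choosing $q' \in (q,r) \cap \Q$ gives a rational strictly less than $r$, which has no $R^*$-witness, so every $j$ admits some $m$ with $\neg R^*(\mathrm{idx}(q'),j,m)$; hence $q \in D^<(r_n)$.

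The main obstacle, and precisely where the modification $R^*$ is essential, is verifying $\inf_n r_n = r$. Given $\epsilon > 0$, let $J(\epsilon)$ be as above. For $n \geq J(\epsilon)$ and any rational $q > r + \epsilon$, every $q' \in (q,1] \cap \Q$ satisfies $q' > r + \epsilon$ and hence has a minimal $R^*$-witness $j \leq n$; for this $j$ we have $\forall m\, R^*(\mathrm{idx}(q'),j,m) = 1$, so the inner condition $\forall j \leq n\ \exists m\ \neg R^*$ fails for every such $q'$, and hence $q \notin D^<(r_n)$. Thus $r_n \leq q$ whenever $n \geq J(\epsilon)$ and $q > r+\epsilon$, giving $r_n \leq r + \epsilon$ for $n \geq J(\epsilon)$. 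Since $\epsilon > 0$ was arbitrary and $r_n \geq r$ for all $n$, the nonincreasing sequence $(r_n)$ satisfies $\inf_n r_n = r$, completing the proof.
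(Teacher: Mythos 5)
Your proof is correct but takes a genuinely different route from the paper's. The paper modifies $R$ to $R_1(x_0,x_1,q) \Leftrightarrow q_{(x_0)_1}\leq q \wedge R((x_0)_0,x_1,q_{(x_0)_1})$, padding the outer existential witness with a code for a smaller rational so that the matrix becomes monotone in $q$; then the set $S_n$ of $q$ for which candidate witness $n$ fails is downward closed, $s_n=\sup S_n$ is left $\Sigma^0_1$, and $r_n=\min\{s_0,\ldots,s_n\}$ is the desired sequence, with $\inf_n s_n=r$ following directly from the monotonicity. Your $R^*$ instead replaces the matrix of $R$ with a bounded search over smaller indices and smaller initial segments, and the pigeonhole argument establishes the key structural fact you rely on: for each $\epsilon>0$, the minimal $R^*$-witness of every rational in $(r+\epsilon,1]$ is bounded by a constant $J(\epsilon)$. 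You then specify $D^<(r_n)$ directly as a uniformly $\Sigma^0_1$ predicate indexed by a bound $n$ on allowed witnesses, and the convergence $\inf_n r_n=r$ is where the witness bound is cashed in. Roughly, both constructions tame the arbitrary existential witness of the original $R$, but the paper packages this as monotonicity of the matrix in $q$ plus a min-of-sups, while you package it as uniform boundedness of minimal witnesses above $r$. One small point to tighten: when invoking the uniform bound you should choose the reference rational $q^{**}$ in $(r,r+\epsilon]$ rather than merely in $(r,1)$, so that $q_i\geq q^{**}$ is guaranteed for every rational $q_i>r+\epsilon$; as written the claim that $J(\epsilon)$ bounds all witnesses on $(r+\epsilon,1]$ needs that choice. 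Finally, the paper stresses that the construction must relativize uniformly (this is needed for the transfinite recursion in the main theorem); your construction is plainly uniform in a $\Sigma^0_2$ index of $D^>(r)$ as well, since $R^*$ is computed uniformly from $R$ and the $\Sigma^0_1$ index of $D^<(r_n)$ is computed uniformly from $R^*$ and $n$, so it would serve that role equally well.
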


\begin{proof}
Suppose $r \in [0,1]$ is right $\Sigma^0_2$.  Then, there is 
a computable $R \subseteq \N^2 \times \Q$ so that 
for all $q \in \Q$, 
\[
q > r\ \Leftrightarrow\ \exists x_0 \forall x_1 R(x_0, x_1, q).
\]
Let
\[
R_1(x_0,x_1,q)\ \Leftrightarrow\ (q_{(x_0)_1} \leq q\ \wedge\ R((x_0)_0, x_1, q_{(x_0)_1}).
\]
Set:
\begin{eqnarray*}
S_n & = & (-\infty, 0)\ \cup\ \{q \in \Q \cap (-\infty,1)\ : \exists x_1 \neg R_1(x_0,x_1,q)\}\\
s_n & = & \sup S_n\\
r_n & = & \min\{s_0, \ldots, s_n\}
\end{eqnarray*}
Thus, by construction, $(r_n)_{n \in \N}$ is non-increasing, and $r_n \in [0,1]$. 

We now show $r = \inf_n r_n$.  It suffices to show that $r = \inf_n s_n$.  We first note that for every $q \in \Q$, 
\[
q > r\ \Leftrightarrow\ \exists x_0 \forall x_1 R_1(x_0,x_1,q).
\]
We now observe that for all $q,q' \in \Q$, 
\[
q < q'\ \wedge\ R_1(x_0,x_1,q)\ \Rightarrow\ R_1(x_0,x_1,q').
\]
Hence, each $S_n$ is closed downward.  It now follows that $D^<(r) \subseteq D^\leq(\inf_n s_n)$ and that 
$D^<(\inf_n s_n) \subseteq \bigcap_n S_n = D^\leq(r)$.  Thus, $r = \inf_n s_n$.

By construction, 
\[
D^<(r_n) = \{q \in \Q\ :\ \forall k \leq n\ \exists q' \in S_k\ q < q'\}.
\]
Since $S_n$ is $\Sigma^0_1$ uniformly in $n$, $(r_n)_{n \in \N}$ is left $\Sigma^0_1$.
\end{proof}

We note that Lemma \ref{lm:rt.sgma02} uniformly relativizes in that the proof yields a computable $f : \N \rightarrow \N$ so that for every $e \in \N$ and $X \subseteq \N$, if $e$ is a right $\Sigma^0_2(X)$ index of a number $s \in [0,1]$, 
then $f(e)$ is a left $\Sigma^0_1(X)$ index of a sequence $(r_n)_{n \in \N}$ so that 
$s = \inf_n r_n$ and so that $r_n \in [0,1]$.  This uniformity will be heavily exploited later.

\begin{lemma}\label{lm:lft.sgma02}
If $r \in [0,1]$ is left $\Sigma^0_2$, then there is a right $\Sigma^0_1$ nondecreasing sequence $(r_n)_{n \in \N}$
so that $r = \sup_n r_n$ and so that $r_n \in [0,1]$.
\end{lemma}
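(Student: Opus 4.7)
The plan is to derive this lemma from Lemma \ref{lm:rt.sgma02} by means of the order-reversing involution $x \mapsto 1 - x$. This is far cleaner than redoing the quantifier manipulations in dual form, and it preserves the uniformity highlighted in the remark following Lemma \ref{lm:rt.sgma02}.

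First I would observe that $r \in [0,1]$ is left $\Sigma^0_2$ if and only if $1 - r$ is right $\Sigma^0_2$. Indeed, for every $q \in \Q$, $q > 1 - r \Leftrightarrow 1 - q < r$, so a right $\Sigma^0_2$ index of $1 - r$ can be computed from a left $\Sigma^0_2$ index of $r$ by substituting $1 - q$ for $q$ in the computable predicate witnessing the Dedekind cut of $r$. I would then apply Lemma \ref{lm:rt.sgma02} to $1 - r$ to obtain a left $\Sigma^0_1$ nonincreasing sequence $(s_n)_{n \in \N}$ in $[0,1]$ with $\inf_n s_n = 1 - r$.

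Next I would set $r_n = 1 - s_n$. Since $(s_n)$ is nonincreasing and lies in $[0,1]$, the sequence $(r_n)$ is nondecreasing and lies in $[0,1]$; moreover $\sup_n r_n = 1 - \inf_n s_n = r$. To see that $(r_n)$ is right $\Sigma^0_1$, note that $q > r_n \Leftrightarrow 1 - q < s_n$, so a $\Sigma^0_1$ definition of $D^>(r_n)$ is obtained uniformly from the $\Sigma^0_1$ definition of $D^<(s_n)$ by the substitution $q \mapsto 1 - q$. Composing this with the index transformation from Lemma \ref{lm:rt.sgma02} gives the full uniform relativization as well.

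I do not anticipate a genuine obstacle here; the only thing to verify is that each of the two substitutions $q \mapsto 1 - q$ (first on the defining predicate for $r$, then on that for each $s_n$) is effective on indices, which is immediate by inspection. Thus the proof will reduce to a short bookkeeping argument once the reduction to Lemma \ref{lm:rt.sgma02} is articulated.
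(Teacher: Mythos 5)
Your proof is correct, but it takes a genuinely different route from the paper's. The paper's proof sketch dualizes the argument for Lemma~\ref{lm:rt.sgma02} directly: it posits a computable $R$ with $q < r \Leftrightarrow \exists x_0 \forall x_1 R(x_0,x_1,q)$, defines $R_1(x_0,x_1,q) \Leftrightarrow q \leq q_{(x_0)_1} \wedge R((x_0)_0, x_1, q_{(x_0)_1})$, sets $S_n = (1,\infty) \cup \{q \in \Q \cap (0,\infty) : \exists x_1\, \neg R_1(x_0,x_1,q)\}$, $s_n = \inf S_n$, and $r_n = \max\{s_0,\ldots,s_n\}$, and then declares the remainder to be a ``straightforward modification'' obtained by flipping inequalities and swapping $\inf$/$\sup$ and upward/downward. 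You instead reduce the lemma to Lemma~\ref{lm:rt.sgma02} through the involution $x \mapsto 1 - x$: since $q > 1 - r \Leftrightarrow 1 - q < r$ and the map $q \mapsto 1 - q$ is a computable bijection of $\Q$ (hence effective on indices via the fixed enumeration $(q_n)$), $r$ left $\Sigma^0_2$ gives $1 - r$ right $\Sigma^0_2$ uniformly; applying Lemma~\ref{lm:rt.sgma02} to $1 - r$ and reflecting the resulting sequence through $1 - x$ yields the desired $(r_n)$, with uniformity preserved by composing the effective index transformations. Both arguments are sound. Your reduction is tighter: it avoids duplicating the quantifier bookkeeping, makes the uniform relativization remark immediate by composition, and would apply verbatim to derive Lemmas~\ref{lm:lft.sgma.alpha+} and~\ref{lm:lft.sgma.lmt} from their right-cut counterparts. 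It also mirrors the symmetry the paper already exploits in the dyadic numerals, where $\nu^\exists_{1-r} = \neg\nu^\forall_r$. The paper's approach has the modest virtue of keeping each lemma self-contained so the reader sees the explicit $\Sigma^0_1$ witness for $D^>(r_n)$, but at the cost of near-verbatim repetition.
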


\begin{proof}[Proof sketch]
Suppose $r \in [0,1]$ is left $\Sigma^0_2$, and let $R \subseteq \N^2 \times \Q$ be a computable relation
so that 
\[
q < r\ \Leftrightarrow\ \exists x_0\ \forall x_1\ R(x_0,x_1,q)
\]
for every $q \in \Q$.
Let 
\[
R_1(x_0,x_1,q)\ \Leftrightarrow q \leq q_{(x_0)_1}\ \wedge\ R((x_0)_0, x_1, q_{(x_0)_1}).
\]
Set:
\begin{eqnarray*}
S_n & = & (1, \infty)\ \cup\ \{q \in \Q \cap (0, \infty)\ : \exists x_1 \neg R_1(x_0,x_1,q)\}\\
s_n & = & \inf S_n\\
r_n & = & \max\{s_0, \ldots, s_n\}
\end{eqnarray*}
Thus, by construction, $(r_n)_{n \in \N}$ is nondecreasing, and $r_n \in [0,1]$. The rest of the proof is 
a straightforward modification of the proof of Lemma \ref{lm:rt.sgma02}.  Namely, one flips the order of the inequalities, and exchanges $\inf$ and $\sup$ as well as `upward' and `downward'.
\end{proof}

Again, we note that Lemma \ref{lm:lft.sgma02} relativizes uniformly.

\begin{lemma}\label{lm:rt.sgma.alpha.+}
Let $\alpha < \CK$.  If $r \in [0,1]$ is right $\Sigma^0_{\alpha + 1}$, then there is a non-increasing left $\Sigma^0_\alpha$ sequence $(r_n)_{n \in \N}$ so that $r = \inf_n r_n$ and so that $r_n \in [0,1]$.
\end{lemma}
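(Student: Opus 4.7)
The plan is to lift the proof of Lemma \ref{lm:rt.sgma02} to the transfinite, replacing the innermost universal quantifier of a $\Sigma^0_2$ relation with a $\Pi^0_\alpha$ relation and reading ``$\Sigma^0_1$'' throughout that argument as ``$\Sigma^0_\alpha$''. The structural skeleton --- form downward-closed rational sets $S_n$, put $s_n = \sup S_n$, smooth to a nonincreasing sequence $r_n = \min\{s_0,\ldots,s_n\}$, and verify $r = \inf_n s_n = \inf_n r_n$ --- will be left untouched.

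First I would use that $r$ is right $\Sigma^0_{\alpha+1}$ to produce a $\Pi^0_\alpha$ relation $P \subseteq \N \times \Q$, obtained uniformly from the given right $\Sigma^0_{\alpha+1}$ index of $r$, so that $q > r \Leftrightarrow \exists x_0\, P(x_0,q)$ for every $q \in \Q$. The same pairing-and-rational-witness trick used in Lemma \ref{lm:rt.sgma02} then replaces $P$ by
\[
P_1(x_0,q) \Leftrightarrow q_{(x_0)_1} \leq q\ \wedge\ P((x_0)_0, q_{(x_0)_1}),
\]
which is still $\Pi^0_\alpha$, still satisfies $q > r \Leftrightarrow \exists x_0\, P_1(x_0,q)$, and is upward-monotone in $q$. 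The sets
\[
S_n = (-\infty,0)\ \cup\ \{q \in \Q \cap (-\infty,1)\ :\ \neg P_1(n,q)\}
\]
are then $\Sigma^0_\alpha$ uniformly in $n$ and downward-closed, so $s_n = \sup S_n$ is left $\Sigma^0_\alpha$ uniformly in $n$, and hence so is $r_n = \min\{s_0,\ldots,s_n\}$. The verification that $r = \inf_n s_n$ is the same downward-closure chase as in Lemma \ref{lm:rt.sgma02}: one checks $D^<(r) \subseteq D^\leq(\inf_n s_n)$ and $D^<(\inf_n s_n) \subseteq \bigcap_n S_n = D^\leq(r)$.

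The only step that is not pure relativization is the very first one --- extracting a $\Pi^0_\alpha$ witness relation from a $\Sigma^0_{\alpha+1}$ index uniformly, and dually converting $\neg \Pi^0_\alpha$ into $\Sigma^0_\alpha$ uniformly in notations. These are standard facts about the $\KO / \hjump$ presentation of the hyperarithmetic hierarchy, but they must be invoked with care so that the uniformity remark accompanying Lemma \ref{lm:rt.sgma02} propagates here as well: a code of the resulting left $\Sigma^0_\alpha$ sequence should be extractable by a single computable function from a right $\Sigma^0_{\alpha+1}$ index of $r$, which is what subsequent levels of the induction will need.
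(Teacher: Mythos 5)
Your proposal is correct for $\alpha \geq 1$, but it takes a genuinely different route from the paper. The paper does not generalize the construction of Lemma~\ref{lm:rt.sgma02} directly. Instead it \emph{reduces} to Lemma~\ref{lm:rt.sgma02} by relativization: it notes that the uniformity of the $\Sigma^0_2$ argument means the lemma relativizes to any oracle, and then (splitting into the cases $\alpha = 0$, $\alpha = 1$, $2 \leq \alpha < \omega$, and $\alpha \geq \omega$) observes that a right $\Sigma^0_{\alpha+1}$ cut is a right $\Sigma^0_2(\emptyset^{(\beta)})$ cut for the appropriate jump level $\beta$, so that the relativized Lemma~\ref{lm:rt.sgma02} immediately hands back a left $\Sigma^0_1(\emptyset^{(\beta)}) = \Sigma^0_\alpha$ sequence. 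Your approach instead rebuilds the argument at level $\alpha$ from scratch, replacing the $\Pi^0_1$ inner relation by a $\Pi^0_\alpha$ one via the normal form $\Sigma^0_{\alpha+1} = \exists^\omega\, \Pi^0_\alpha$ and tracking that complements and bounded quantification preserve $\Sigma^0_\alpha$. Your version is arguably more self-contained and dodges the finite/infinite bookkeeping (the $\alpha - 2$ shift versus the $\Sigma^0_1(\emptyset^{(\alpha)}) = \Sigma^0_\alpha$ identity at $\alpha \geq \omega$), at the cost of explicitly invoking the $\exists \Pi^0_\alpha$ normal form and its uniformities, which the paper's modular reduction keeps hidden inside the relativization remark.

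One genuine gap: the lemma as stated allows $\alpha = 0$, and your construction does not cover it. When $\alpha = 0$ you would set $P$ to be a computable relation and $S_n$ a computable set of rationals, but $s_n = \sup S_n$ need not be rational (nor have a computable Dedekind cut), so the resulting sequence is only left $\Sigma^0_1$, not left $\Sigma^0_0$. The paper handles $\alpha = 0$ separately by enumerating $D^>(r)$ directly and taking rational truncations $r_n = \min\{1, s_0, \ldots, s_n\}$, which keeps the $r_n$ rational. You should add this as a base case (it is cheap and does not disturb the uniformity), or restrict the statement to $\alpha \geq 1$ and note that the transfinite recursion in Theorem~\ref{thm:main} only ever invokes the lemma with $\alpha \geq 1$.
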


\begin{proof}
We first consider the case $\alpha = 0$.  
Let $(s_n)_{n \in \N}$ be an effective enumeration of $D^>(r)$.   Set $r_n = \min\{1, s_0, \ldots, s_n\}$.
Then, $(r_n)_{n \in \N}$ is a computable sequence of rational numbers, and $r = \inf_n r_n$.  

The case $\alpha = 1$ is handled by Lemma \ref{lm:rt.sgma02}

Suppose $2 \leq \alpha < \omega$.  Let $k = \alpha -2$.  Then, $D^>(r)$ is $\Sigma^0_2(\emptyset^{(k+1)})$.
By the relativization of Lemma \ref{lm:rt.sgma02}, there is a left $\Sigma^0_1(\emptyset^{(k+1)})$ non-increasing 
sequence $(r_n)_{n \in \N}$ so that $r = \inf_n r_n$ and so that $r_n \in [0,1]$.  Thus, $(r_n)_{n \in \N}$ is 
left $\Sigma^0_\alpha$.

Finally, suppose $\alpha$ is infinite.  Thus, $D^>(r)$ is $\Sigma^0_1(\emptyset^{(\alpha+1)}) = \Sigma^0_1(\emptyset^{(\alpha)})$.  Again, by the relativization of Lemma \ref{lm:rt.sgma02}, there is a left 
$\Sigma^0_1(\emptyset^{(\alpha)}) = \Sigma^0_\alpha$ nonincreasing sequence so that 
$r = \inf_n r_n$ and so that $r_n \in [0,1]$.
\end{proof}

\begin{lemma}\label{lm:lft.sgma.alpha+}
Let $\alpha < \CK$.  Suppose $r \in [0,1]$ is left $\Sigma^0_{\alpha + 1}$.  Then, there is a right $\Sigma^0_\alpha$
nondecreasing sequence $(r_n)_{n \in \N}$ so that $r = \sup_n r_n$ and so that $r_n \in [0,1]$.
\end{lemma}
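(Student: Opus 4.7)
The plan is to mirror the four-case analysis used in the proof of Lemma \ref{lm:rt.sgma.alpha.+}, substituting the uniform relativization of Lemma \ref{lm:lft.sgma02} for that of Lemma \ref{lm:rt.sgma02} throughout. The substitutions are just the dual ones indicated at the end of the sketch of Lemma \ref{lm:lft.sgma02}: nondecreasing for nonincreasing, $\sup$ for $\inf$, right for left, and $0$ for $1$ as the clamping value.

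For the base case $\alpha = 0$, I would take an effective enumeration $(s_n)_{n \in \N}$ of $D^<(r)$ and set $r_n = \max\{0, s_0, \ldots, s_n\}$; then $(r_n)_{n \in \N}$ is a computable nondecreasing sequence of rationals in $[0,1]$ with $r = \sup_n r_n$, and a computable sequence of rationals is trivially right $\Sigma^0_0$. The case $\alpha = 1$ is exactly the content of Lemma \ref{lm:lft.sgma02}.

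For $2 \leq \alpha < \omega$, I would set $k = \alpha - 2$ and observe that $D^<(r)$ is $\Sigma^0_2(\emptyset^{(k+1)})$. Applying the uniform relativization of Lemma \ref{lm:lft.sgma02} to the oracle $\emptyset^{(k+1)}$ yields a right $\Sigma^0_1(\emptyset^{(k+1)}) = \Sigma^0_\alpha$ nondecreasing sequence $(r_n)_{n \in \N}$ in $[0,1]$ with $r = \sup_n r_n$. For the remaining case in which $\alpha$ is infinite, $D^<(r)$ is $\Sigma^0_1(\emptyset^{(\alpha+1)}) = \Sigma^0_1(\emptyset^{(\alpha)})$ by the usual collapse at limits above $\omega$, so a second appeal to the relativization of Lemma \ref{lm:lft.sgma02} supplies a right $\Sigma^0_1(\emptyset^{(\alpha)}) = \Sigma^0_\alpha$ nondecreasing sequence with the required properties.

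I do not expect any genuine obstacle: the entire argument is a direct dualization of Lemma \ref{lm:rt.sgma.alpha.+}, and its correctness rests on the uniformity in the oracle of Lemma \ref{lm:lft.sgma02}, which has already been noted. The only point requiring care is bookkeeping the appropriate jump indices in the finite and infinite successor cases, so that the resulting sequence is verified to be right $\Sigma^0_\alpha$ rather than at some adjacent level.
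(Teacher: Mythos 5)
Your proposal is correct and follows exactly the route the paper intends: the paper's own "proof" of this lemma is the one-line sketch "Similar to proof of Lemma \ref{lm:rt.sgma.alpha.+}," and your four-case dualization (swapping $D^>$ for $D^<$, $\min$ for $\max$, $\inf$ for $\sup$, and invoking the relativization of Lemma \ref{lm:lft.sgma02} in place of Lemma \ref{lm:rt.sgma02}) is precisely that argument carried out.
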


\begin{proof}[Proof sketch:]
Similar to proof of Lemma \ref{lm:rt.sgma.alpha.+}.
\end{proof}

Again, we observe that the above lemmas all uniformly relativize.

\begin{lemma}\label{lm:rt.sgma.lmt}
Suppose $0 < \alpha < \CK$ is a limit ordinal, and suppose $r \in [0,1]$ is right $\Sigma^0_\alpha$.
Then, there is a computable $h : \N \rightarrow \alpha$ and a right $\Sigma^0_h$ non-increasing sequence
$(r_n)_{n \in \N}$ so that $r = \inf_n r_n$ and so that $r_n \in [0,1]$.
\end{lemma}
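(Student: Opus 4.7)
The plan is to exploit the fundamental-sequence structure of the limit ordinal $\alpha$ to decompose $D^>(r)$ as an increasing union of sets of strictly lower complexity, and then to take the infima of the pieces, mimicking the inf-of-a-sequence idea from Lemma~\ref{lm:rt.sgma02}.

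First I fix a notation $a \in \KO$ for $\alpha$.  Since $\alpha$ is a limit ordinal, this notation has the form $a = 3 \cdot 5^e$, where $\phi_e$ is total and $(\phi_e(n))_{n \in \N}$ is $\lneqO$-increasing with ordinals $\beta_n := |\phi_e(n)|_\KO$ satisfying $\sup_n \beta_n = \alpha$; in particular $\beta_n < \alpha$ for every $n$.  Since $r$ is right $\Sigma^0_\alpha$, we have $D^>(r) = W_k^{\hjump(a)}$ for some $k$, and by definition $\hjump(a)$ is the effective join $\bigoplus_n \hjump(\phi_e(n))$.  Invoking the finite-oracle-use principle, each rational enumerated into $D^>(r)$ is enumerated by a computation that consults only finitely many columns of this join, yielding an \emph{increasing} decomposition $D^>(r) = \bigcup_n A_n$, where $A_n$ collects the rationals enumerated using only queries in columns $0, \ldots, n$.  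Uniformly in $n$, $A_n$ is c.e.\ in $\hjump(\phi_e(0)) \oplus \cdots \oplus \hjump(\phi_e(n))$, which is Turing-equivalent to $\hjump(\phi_e(n))$ because $\phi_e(i) \leq_\KO \phi_e(n)$ for $i \leq n$.  Hence $A_n$ is $\Sigma^0_{\gamma_n}$ for an ordinal $\gamma_n < \alpha$ obtained uniformly from the notation $\phi_e(n)$.

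Next I set $r_n = \inf((A_n \cap [0,1]) \cup \{1\})$.  Nesting of the $A_n$ makes $(r_n)$ non-increasing; the inclusion $A_n \subseteq D^>(r)$ together with the added point $1$ forces $r_n \in [r,1] \subseteq [0,1]$; and $\bigcup_n A_n = D^>(r)$ gives $\inf_n r_n = r$.  For the complexity, for each rational $q$ we have $q > r_n$ iff $q > 1$ or $(\exists q' \in A_n)(q' < q)$, so $D^>(r_n)$ is $\Sigma^0_{\gamma_n}$ uniformly in $n$.  Taking $h(n) = \gamma_n$ with $\hath(n)$ a $\KO$-notation for $\gamma_n$ read off uniformly from $\phi_e(n)$, the $\lneqO$-monotonicity of $(\phi_e(n))$ carries over to $\hath$, so $h : \N \to \alpha$ is computable in the sense defined in Section~\ref{sec:back}.

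The main technical hurdle is the ordinal bookkeeping: pinning down, within the paper's conventions, the exact ordinal $\gamma_n < \alpha$ that corresponds to ``c.e.\ in $\hjump(\phi_e(n))$'', and verifying that the decomposition $(A_n)_{n \in \N}$ is sufficiently uniform that $h$ is genuinely computable rather than merely pointwise in $\alpha$.  Once this indexing is settled, the rest of the argument is a direct transfinite extension of the pattern established in Lemmas~\ref{lm:rt.sgma02} and~\ref{lm:rt.sgma.alpha.+}.
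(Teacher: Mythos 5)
Your proof is correct, and at bottom it is the same argument as the paper's: decompose $D^>(r)$ into an exhausting family of sets of strictly smaller complexity, take the infimum of each piece to get a right $\Sigma^0_{h(n)}$ real, and verify that the pointwise infimum recovers $r$.  The paper simply invokes ``standard techniques'' to get a family $(S_\beta)_{\beta<\alpha}$ with $S_\beta$ being $\Sigma^0_\beta$ uniformly, and then builds the non-increasing sequence by composing with a computable surjection $h:\N\to\alpha$ and taking running minimums $r_n=\min\{s_{h(0)},\ldots,s_{h(n)}\}$.  You unpack those ``standard techniques'' concretely via the finite-oracle-use argument on $\hjump(3\cdot 5^e)$, which buys you a genuinely \emph{nested} family $A_0\subseteq A_1\subseteq\cdots$; the non-increasing property then comes for free, sparing both the ``WLOG upwardly closed'' normalization and the running-minimum step.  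The worry you flag at the end about ordinal bookkeeping evaporates under the paper's own convention: a $\Sigma^0_\beta$ index is precisely an index of a set c.e.\ in $\hjump(b)$ for some $b$ with $|b|_\KO=\beta$, so ``c.e.\ in $\hjump(\phi_e(n))$'' \emph{is} $\Sigma^0_{\beta_n}$ with $\beta_n=|\phi_e(n)|_\KO$, no shift by one; and $\hath=\phi_e$ already witnesses computability of $h$ since its range is $\lneqO$-linearly ordered by the definition of a limit notation.
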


\begin{proof}
By standard techniques, there is a sequence $(S_\beta)_{\beta< \alpha}$ so that $D^>(r) = \bigcup_{\beta < \alpha} S_\beta$ and so that  $S_\beta$ is $\Sigma^0_\beta$ uniformly in $\beta$.  
Without loss of generality, we may assume that $S_\beta$ is upwardly closed.  
Let $s_\beta = \inf S_\beta \cup (1,\infty)$.  Thus, $s_\beta \in [0,1]$ is right $\Sigma^0_\beta$ uniformly in $\beta$. 

To see that $r = \inf_\beta s_\beta$, first suppose $q$ is a rational number so that $q > r$.  
Then, $q \in S_\beta$ for some $\beta < \alpha$, and so $q \geq \inf S_\beta \geq s_\beta$.  Thus, 
$q \geq \inf_{\beta < \alpha} s_\beta$.  On the other hand, suppose $q > \inf_{\beta < \alpha} s_\beta$.
Then, there exists $q' \in S_\beta \cup (1, \infty)$ so that $q > q'$.  If $q' \in S_\beta$, then 
$q \in S_\beta$ and so $q > r$.  If $q' > 1$, then since $r \leq 1$, $q > r$.  Thus, $\inf_{\beta < \alpha} s_\beta \geq r$.     

Finally, let $h$ be any computable surjection of $\N$ onto $\alpha$, and set 
\begin{eqnarray*}
r_n = \min\{s_{h(0)}, \ldots, s_{h(n)}\}.
\end{eqnarray*}

\end{proof}

\begin{lemma}\label{lm:lft.sgma.lmt}
Suppose $0 < \alpha < \CK$ is a limit ordinal, and suppose $r \in [0,1]$ is left $\Sigma^0_\alpha$.  Then, 
there is a computable $h : \N \rightarrow \alpha$ and a left $\Sigma^0_h$ non-increasing sequence
$(r_n)_{n \in \N}$ so that $r = \sup_n r_n$ and so that $r_n \in [0,1]$.
\end{lemma}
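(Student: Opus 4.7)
The plan is to mirror the proof of Lemma~\ref{lm:rt.sgma.lmt}, flipping inequalities and exchanging suprema with infima. First, by standard hyperarithmetic techniques (unwinding the $\Sigma^0_\alpha$ definition through Kleene's $\KO$), I would produce a sequence $(S_\beta)_{\beta < \alpha}$ with $D^<(r) = \bigcup_{\beta < \alpha} S_\beta$ and $S_\beta$ being $\Sigma^0_\beta$ uniformly in $\beta$. Since $D^<(r)$ is downwardly closed in $\Q$, replacing each $S_\beta$ by its downward closure in $\Q$ (still uniformly $\Sigma^0_\beta$) preserves both the containment $S_\beta \subseteq D^<(r)$ and the union $\bigcup_{\beta < \alpha} S_\beta = D^<(r)$, so without loss of generality each $S_\beta$ is downwardly closed.

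Next I would set $s_\beta = \sup(S_\beta \cup (-\infty, 0))$. Because $S_\beta \subseteq D^<(r) \subseteq (-\infty, r]$ and $r \leq 1$, this gives $s_\beta \in [0,1]$, and $s_\beta$ is left $\Sigma^0_\beta$ uniformly in $\beta$. To see $r = \sup_{\beta < \alpha} s_\beta$, I verify both Dedekind-cut inclusions: if $q \in \Q$ and $q < r$, then $q \in S_\beta$ for some $\beta < \alpha$, hence $q \leq s_\beta \leq \sup_{\beta < \alpha} s_\beta$; conversely, if $q < \sup_{\beta < \alpha} s_\beta$, then $q < s_\beta$ for some $\beta$, so there is some $q' \in S_\beta \cup (-\infty, 0)$ with $q < q'$, and in either case $q < r$ (using downward closure of $S_\beta \subseteq D^<(r)$ when $q' \in S_\beta$, and $r \geq 0$ when $q' < 0$).

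Finally, fixing a computable surjection $h : \N \to \alpha$ and setting $r_n = \max\{s_{h(0)}, \ldots, s_{h(n)}\}$ yields a nondecreasing sequence in $[0,1]$ (I read the statement's ``non-increasing'' as a typo for ``nondecreasing,'' as forced by $r = \sup_n r_n$) with $\sup_n r_n = \sup_{\beta < \alpha} s_\beta = r$. Uniformity of the left $\Sigma^0_{h(k)}$ indices of the reals $s_{h(k)}$, combined with the identity $D^<(\max\{t_0, \ldots, t_n\}) = \bigcup_{i \leq n} D^<(t_i)$, then delivers a left $\Sigma^0_h$ index of $(r_n)_{n \in \N}$.

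The main obstacle is almost purely notational: the uniform decomposition of a limit-level $\Sigma^0_\alpha$ set as a union of $\Sigma^0_\beta$ sets for $\beta < \alpha$, with indices computable from a $\Sigma^0_\alpha$ index, requires careful reading of the $\KO$-based definition and bookkeeping of indices, but this is a standard move in the hyperarithmetic hierarchy. The substantive verification is only the two-sided Dedekind-cut calculation in the middle paragraph; everything else is mechanical.
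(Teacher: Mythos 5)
Your proof is correct and is exactly what the paper intends: the paper's own proof of this lemma is simply the remark ``Similar to proof of Lemma \ref{lm:rt.sgma.lmt},'' and your writeup is the faithful dualization of that proof, flipping $\inf$/$\sup$, $\min$/$\max$, upward/downward closure, and replacing the tail $(1,\infty)$ with $(-\infty,0)$. You also correctly flag that the statement's ``non-increasing'' must be a typo for ``nondecreasing,'' as forced by $r = \sup_n r_n$.
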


\begin{proof}[Proof sketch]
Similar to proof of \ref{lm:rt.sgma.lmt}.
\end{proof} 

\begin{proof}[Proof of Theorem \ref{thm:main}]
We proceed by effective transfinite recursion.  We begin with the case $\alpha = 1$.  
Suppose $D^<(r)$ is $\Sigma^0_1$.  Let $g$ be a computable surjection of $\N$ onto 
$D^<(r) \cap \Dyadic \cap (0,1) \cup \{1\}$.  Set $\Phi = \bigcup_n \nu^\exists_{h(n)}$.  
Suppose $\M$ is an interpretation of $\lang_M$ (\emph{i.e.}, a metric space).  Then, $\Phi^\M = \inf_n r_n = r$.  by the density of the dyadics,   
If $D^>(r)$ is $\Sigma^0_1$, then we take $(r_n)_{n \in \N}$ to be an effective enumeration of 
$D^>(r) \cap \Dyadic \cap (0,1) \cup \{0\}$.  

We now handle the recursive steps.  Let $\alpha < \CK$, and suppose $r$ is left $\Sigma^0_{\alpha+1}$.  
By Lemma \ref{lm:lft.sgma.alpha+}, there is a right $\Sigma^0_\alpha$ sequence $(r_n)_{n \in \N}$ so that 
$r = \inf_n r_n$ and so that $r_n \in [0,1]$.  From $n \in \N$, it is possible to compute a code of a 
$\Sigma^0_\alpha$ sentence $\Phi_n$ of $\lang_M$ so that $\Phi_n^\M = r$ for every interpretation $\M$ of $L_M$.  
We can then compute a code of $\Phi = \infconj_n \Phi_n$.  Hence, $\Phi^\M = r$ for every interpretation $\M$ of $L_M$.
The case where $r$ is right $\Sigma^0_\alpha$ is handled similarly.

Suppose $\alpha > 0$ is a limit ordinal.  We first consider the case where $r$ is left $\Sigma^0_\alpha$.  Then, 
by Lemma \ref{lm:lft.sgma.lmt}, there is a computable $h : \N \rightarrow \alpha$ and a left $\Sigma^0_h$
sequence $(r_n)_{n \in \N}$ so that $r = \sup_n r_{h(n)}$ and so that $r_n \in [0,1]$.  For each $n \in \N$, 
let $\Phi_n$ be a computable $\Sigma^0_{h(n)}$ infinitary formula of $L$ so that 
$\Phi^\M_n = \Phi_n$.  Set $\Phi = \infdis_n \Phi_n$.

The case where $r$ is right $\Sigma^0_\alpha$ is handled similarly.
\end{proof}

\section{Conclusion}\label{sec:conc}

We conclude with some speculation about the complexity of theories in continuous logic.

Suppose $\M$ is an interpretation of $\lang$.  By the \emph{computable infinitary theory of $\M$} we mean
the \emph{function} that maps each computable infinitary sentence $\Phi$ of $\lang$ to $\Phi^\M$.  
We denote this function $\Th^C_{\omega_1 \omega}(\M)$.  
Let $X \subseteq \omega$.  We say that $X$ \emph{computes} $\Th^C_{\omega_1\omega}(\M)$ if $X$ computes
a partial function $f : \subseteq \N^2 \rightarrow \Q$ so that whenever $e$ is a code of a computable infinitary sentence of $\lang$ and $k \in \N$, $|f(e,k) - q| < 2^{-k}$.  

\begin{corollary}
Suppose $\M$ is an interpretation of $\lang$, and suppose $X$ computes $\Th_{\omega_1\omega}^C(\M)$.
Then, $X$ computes every hyperarithmetic set.  Thus, no hyperarithmetic set computes $\Th_{\omega_1\omega}^C(\M)$.
\end{corollary}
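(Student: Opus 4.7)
The plan is to reduce every hyperarithmetic $A\subseteq\N$ to a hyperarithmetic real $r_A\in[0,1]$ from which $A$ is uniformly computable, invoke Theorem \ref{thm:main} to obtain a computable infinitary $\lang_M$-numeral $\Phi_A$ for $r_A$, and then recover $A$ from the rational approximations to $\Phi_A^\M$ supplied by $X$.

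Concretely, I would encode $A$ by
\[
r_A \;=\; \sum_{n=0}^{\infty}\frac{\chi_A(n)+1}{3^{n+1}},
\]
so that every ternary digit of $r_A$ lies in $\{1,2\}$. This eliminates endpoint ambiguity and ensures that $\chi_A(n)$ is uniformly recoverable from any rational within $3^{-(n+2)}$ of $r_A$. If $A$ is hyperarithmetic, say $\Delta^0_\alpha$ with $\alpha<\CK$, then standard tail estimates on the partial sums show that each Dedekind cut of $r_A$ is $\Sigma^0_\alpha$, with a $\Sigma^0_\alpha$ index uniformly computable from a hyperarithmetic index of $A$. Theorem \ref{thm:main} then produces a code of a computable infinitary numeral $\Phi_A$ for which $\Phi_A^\M = r_A$ in every interpretation $\M$ of $\lang_M$---in particular, for the $\M$ fixed in the corollary. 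Feeding this code and successive precisions $k\in\N$ to the function $f\leq_T X$ that witnesses $X$-computability of $\Th^C_{\omega_1\omega}(\M)$ yields rationals converging to $r_A$ with a uniform computable modulus, so $X$ computes $r_A$ and hence $A$.

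The second assertion then falls out by a standard hierarchy argument: any hyperarithmetic $Y$ satisfies $Y\leq_T\emptyset^{(\gamma)}$ for some $\gamma<\CK$, so by the preceding paragraph $Y$ would also compute the hyperarithmetic set $\emptyset^{(\gamma+1)}$, contradicting $\emptyset^{(\gamma+1)}\not\leq_T\emptyset^{(\gamma)}$. The main obstacle is the uniformity bookkeeping---checking that the pipeline from a hyperarithmetic index of $A$ to a $\Sigma^0_\alpha$ index of a Dedekind cut of $r_A$ to a code of $\Phi_A$ (via the effective form of Theorem \ref{thm:main}) is genuinely effective throughout. Every other step is a direct appeal to Theorem \ref{thm:main} or to the definition of ``$X$ computes $\Th^C_{\omega_1\omega}(\M)$''.
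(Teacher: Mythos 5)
Your proposal is correct and is the natural argument that the corollary is implicitly inviting (the paper states it without proof, and your route is the obvious way to derive it from Theorem \ref{thm:main}): encode a hyperarithmetic set $A$ into a hyperarithmetic real $r_A$ with a gap-preserving digit expansion, obtain a code for a numeral $\Phi_A$ with $\Phi_A^\M = r_A$, pull rational approximations to $r_A$ from $X$ via the definition of ``$X$ computes $\Th^C_{\omega_1\omega}(\M)$'', decode $A$, and then kill off the hyperarithmetic candidates by the usual jump-hierarchy argument. The one technical slip is minor: with your ternary encoding $r_A=\sum_n(\chi_A(n)+1)3^{-(n+1)}$, if $A,A'$ first differ at position $n$ then $|r_A-r_{A'}|\ge \tfrac{1}{2}\cdot 3^{-(n+1)}$, and once the partial sum $p_n$ is known, the two candidate values of $r_A-p_n$ are separated by $\tfrac{1}{2}\cdot 3^{-(n+1)}$; so you actually need a rational within $\tfrac{1}{4}\cdot 3^{-(n+1)}$ of $r_A$, not within $3^{-(n+2)}=\tfrac{1}{3}\cdot 3^{-(n+1)}$, to recover $\chi_A(n)$ safely (equivalently, request precision $2^{-k}$ with $2^{-k}<\tfrac{1}{4}\cdot 3^{-(n+1)}$). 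This is a constant-factor bookkeeping fix and does not affect the structure of the argument. Two further points worth making explicit when you write it up: (i) although Theorem \ref{thm:main} produces $\lang_M$-numerals, $\lang_M$ is a sub-language of the ambient $\lang$, so $\Phi_A$ is also a computable infinitary $\lang$-sentence and its value in $\M$ depends only on the metric, which is what lets you apply the definition of $\Th^C_{\omega_1\omega}(\M)$; and (ii) the statement ``$X$ computes every hyperarithmetic set'' is non-uniform, so the code of $\Phi_A$ may simply be hard-wired into the reduction for each fixed $A$; no uniformity of the $A\mapsto\Phi_A$ pipeline is actually needed for the first sentence of the corollary, only for the stronger uniform claims you optionally sketch.
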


\begin{question}
If $\M$ is an interpretation of $\lang$, does it follow that there is a least Turing degree $\mathbf{d}$ so that 
every $X \in \mathbf{d}$ computes $\Th^C_{\omega_1\omega}(\M)$?
\end{question}

\bibliographystyle{amsplain}
\bibliography{ourbib}

\providecommand{\bysame}{\leavevmode\hbox to3em{\hrulefill}\thinspace}
\providecommand{\MR}{\relax\ifhmode\unskip\space\fi MR }
\providecommand{\MRhref}[2]{%
  \href{http://www.ams.org/mathscinet-getitem?mr=#1}{#2}
}
\providecommand{\href}[2]{#2}
\begin{thebibliography}{1}

\bibitem{Ash.Knight.2000}
C.~J. Ash and J.~Knight, \emph{Computable structures and the hyperarithmetical
  hierarchy}, Studies in Logic and the Foundations of Mathematics, vol. 144,
  North-Holland Publishing Co., Amsterdam, 2000.

\bibitem{BenYaacov.Berenstein.Henson.Usvyatsov.2008}
Ita\"\i Ben\text{ }Yaacov, Alexander Berenstein, C.~Ward Henson, and Alexander
  Usvyatsov, \emph{Model theory for metric structures}, Model theory with
  applications to algebra and analysis. {V}ol. 2, London Math. Soc. Lecture
  Note Ser., vol. 350, Cambridge Univ. Press, Cambridge, 2008, pp.~315--427.

\bibitem{BenYaacov.Iovino.2009}
Ita\"{\i} Ben\text{ }Yaacov and Jos\'{e} Iovino, \emph{Model theoretic forcing
  in analysis}, Ann. Pure Appl. Logic \textbf{158} (2009), no.~3, 163--174.

\bibitem{BenYaacov.Pedersen.2010}
Ita\"{i} Ben\text{ }Yaacov and Arthur~Paul Pedersen, \emph{A proof of
  completeness for continuous first-order logic}, The Journal of Symbolic Logic
  \textbf{75} (2010), no.~1, 168--190.

\bibitem{BenYaacov.Usvyatsov.2010}
Itai Ben\text{ }Yaacov and Alexander Usvyatsov, \emph{Continuous first-order
  logic and local stability}, Transactions of the American Mathematical Society
  \textbf{362} (2010), 5213--5259.

\bibitem{Camrud.Goldbring.McNicholl.2021+}
C.~Camrud, I.~Goldbring, and T.H. McNicholl, \emph{On the complexity of the
  theory of a computably presented metric structure}, Submitted. Preprint
  available at https://arxiv.org/abs/2106.05372, 2021.

\bibitem{Eagle.2017}
Christopher~J. Eagle, \emph{Expressive power of infinitary {$[0,1]$}-logics},
  Beyond first order model theory, CRC Press, Boca Raton, FL, 2017, pp.~3--22.

\bibitem{Sacks.1990}
Gerald~E. Sacks, \emph{Higher recursion theory}, Perspectives in Mathematical
  Logic, Springer-Verlag, Berlin, 1990. \MR{1080970 (92a:03062)}

\end{thebibliography}

\end{document}